\theoremstyle{definition}
\def\fnum{equation}
\newtheorem{Thm}[\fnum]{Theorem}
\newtheorem{Cor}[\fnum]{Corollary}
\newtheorem{Lem}[\fnum]{Lemma}
\numberwithin{equation}{section}
\def\RR{{\bold R}}
\def\CC{{\bold C }}
\newcommand{\dv}{{\text {div}}}
\newcommand{\e}{{\text {e}}}
\newcommand{\cL}{{\mathcal{L}}}
\newcommand{\eqr}[1]{(\ref{#1})}
\title{Parabolic frequency on manifolds}
\author[]{Tobias Holck Colding}%
\address{MIT, Dept. of Math.\\
77 Massachusetts Avenue, Cambridge, MA 02139-4307.}
\author[]{William P. Minicozzi II}%
\thanks{The  authors
were partially supported by NSF Grants DMS 1812142 and DMS 1707270.}
\email{colding@math.mit.edu  and minicozz@math.mit.edu}
\begin{document}

\maketitle

\begin{abstract}
We prove monotonicity of a parabolic frequency on manifolds.  This is a parabolic analog of Almgren's frequency function.  Remarkably we get monotonicity on all manifolds and no curvature assumption is needed.  When the manifold is Euclidean space and the drift operator is the Ornstein-Uhlenbeck operator this can been seen to imply Poon's frequency monotonicity for the ordinary heat equation.    Monotonicity of frequency is a parabolic analog of the 19th century Hadamard three circles theorem about log convexity of holomorphic functions on $\CC$.
From the monotonicity, we get parabolic unique continuation and backward uniqueness.  
\end{abstract}

\section{Introduction}

Bounds on growth for functions satisfying a PDE give  crucial information and have many consequences.   One of the oldest bounds of this type is Hadamard's three circles theorem for holomorphic functions.  
For elliptic equations, such as the Laplace equation, Almgren proved the monotonicity of a frequency function that measures the rate of growth, \cite{A}.   
Almgren's frequency played a fundamental role in his regularity results, \cite{A}, and other areas; see, e.g., \cite{GL}, \cite{Lo}.  Almgren's frequency was generalized to the heat equation by Poon, \cite{P}, who proved the
monotonicity of a parabolic frequency function.  The results of Almgren and Poon rely heavily on the scaling structure of $\RR^n$ (cf. \cite{CM1}) and do not extend globally to general manifolds.  
Here we prove a very general monotonicity for drift heat equations on any manifold and show that this general monotonicity implies the earlier one.  Part of the strength is the simplicity of the argument yet the power of the consequences.

Suppose that $(M,g)$ is a Riemannian manifold.  Let $\phi:M\to \RR$ be a smooth function and  define an operator $\cL_{\phi}$ (drift Laplacian) on vector-valued functions $u: M \to \RR^N$ by
\begin{align}  \label{e:defLphi}
\cL_{\phi}\,u=\Delta\,u-\langle\nabla u,\nabla \phi\rangle=\e^{\phi}\,\dv\,\left(\e^{-\phi}\,\nabla u\right)\, .
\end{align}
These operators play an important role in many parabolic problems; see, e.g., \cite{CM2}, \cite{CM3}.
The prime example of  $\cL_{\phi}$ is where $M=\RR^n$ with the flat metric, $\phi=\frac{|x|^2}{4}$ and $\cL_{\frac{|x|^2}{4}}\,u=\Delta\,u-\frac{1}{2}\,\langle \nabla u,x\rangle$ is the Ornstein-Uhlenbeck operator.  
We let $L^2_{\phi}$ and $W^{1,2}_{\phi}$ be the spaces of square integrable $\RR^N$-valued functions and Sobolev functions with respect to the weight 
$\e^{-\phi}$.   It follows from \eqr{e:defLphi} that $\cL_{\phi}$ is self-adjoint on $W^{1,2}_{\phi}$ with respect to the weighted volume
\begin{align}
\int \langle u\, , \cL_{\phi}\,v \rangle \,\e^{-\phi}=-\int \langle \nabla u,\nabla v\rangle\,\e^{-\phi}\, .
\end{align}

\vskip2mm
Suppose that $u:M\times [a,b]\to \RR^N$ is smooth and{\footnote{Some growth assumption is necessary to rule out the classical Tychonoff example.}}  $u , u_t \in W^{1,2}_{\phi}$ for each $t\in [a,b]$.  
Set
\begin{align}
I(t)&=\int |u|^2\,\e^{-\phi}\, ,\\
D(t)&=-\int |\nabla u|^2\,\e^{-\phi}=\int \langle u\, , \cL_{\phi}\,u \rangle \,\e^{-\phi}\, ,\label{e:D}\\
U(t)&=\frac{D}{I}\, .
\end{align}
Observe that with our convention $U$ is always non-positive.

The next theorem is a parabolic version of the classical Hadamard's three circle theorem{\footnote{The three circles theorem was stated and proven by J.E. Littlewood in 1912, but he   stated it as a known theorem.  Harald Bohr and Edmund Landau, in 1896, attribute the theorem to Jacques Hadamard; Hadamard did not publish a proof.}}
 for holomorphic functions:

\begin{Thm}   \label{t:hadamard}
When $(\partial_t-\cL_{\phi})\,u=0$, then $(\log I)'(t)=2\,U(t)$ and $\log I(t)$ is convex so $U'\geq 0$.    Moreover, when $U$ is constant, then $u(x,t)=\e^{U\,t}\,u(x,0)$ and $u(\cdot,0)$ is an eigenfunction of $\cL_{\phi}$ with eigenvalue $-U$.  
\end{Thm}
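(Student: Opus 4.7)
The plan is to differentiate $I(t)$ and $D(t)$ directly using the equation $u_t=\cL_\phi u$ and self-adjointness of $\cL_\phi$, and then use Cauchy--Schwarz in $L^2_\phi$ to extract the log-convexity. No curvature or global geometric information enters, which matches the statement that monotonicity holds on all manifolds.

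First I would differentiate $I(t)=\int|u|^2\,\e^{-\phi}$ under the integral sign. Using $u_t=\cL_\phi u$ and the integration by parts already recorded in the excerpt,
\[
I'(t)=2\int\langle u,u_t\rangle\,\e^{-\phi}=2\int\langle u,\cL_\phi u\rangle\,\e^{-\phi}=2\,D(t),
\]
so $(\log I)'(t)=I'/I=2U(t)$, which gives the first identity.

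Next I would differentiate $D(t)=\int\langle u,\cL_\phi u\rangle\,\e^{-\phi}$. The product rule produces two terms which, by self-adjointness of $\cL_\phi$ on $W^{1,2}_\phi$, coincide, and then substituting $\cL_\phi u=u_t$ gives
\[
D'(t)=2\int\langle u_t,\cL_\phi u\rangle\,\e^{-\phi}=2\int|u_t|^2\,\e^{-\phi}.
\]
Now Cauchy--Schwarz in $L^2_\phi$ applied to the pair $(u,u_t)$ yields
\[
D(t)^2=\left(\int\langle u,u_t\rangle\,\e^{-\phi}\right)^2\leq\left(\int|u|^2\,\e^{-\phi}\right)\left(\int|u_t|^2\,\e^{-\phi}\right)=\frac{I(t)\,D'(t)}{2}.
\]
Combining this with $I'=2D$ gives
\[
U'(t)=\frac{D'I-DI'}{I^2}=\frac{D'I-2D^2}{I^2}\geq 0,
\]
so $\log I$ is convex.

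For the rigidity statement, if $U$ is constant on $[a,b]$ then $U'\equiv 0$, which forces equality throughout the Cauchy--Schwarz step at every $t$. This in turn forces $u_t(x,t)=\lambda(t)\,u(x,t)$ for a scalar function $\lambda(t)$ of $t$ alone. Combined with $u_t=\cL_\phi u$, each time slice $u(\cdot,t)$ is an eigenfunction of $\cL_\phi$ with eigenvalue $\lambda(t)$, and then $D=\lambda I$ gives $\lambda=U$. Integrating the ODE $u_t=Uu$ (with $U$ constant) in $t$ yields $u(x,t)=\e^{Ut}\,u(x,0)$, so $u(\cdot,0)$ is itself an eigenfunction, as claimed.

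The main potential obstacle is not an analytic inequality but a technicality: one must justify differentiation under the integral sign and the vanishing of boundary-at-infinity terms in the integration by parts on a possibly non-compact $M$. Both are controlled by the hypothesis $u,u_t\in W^{1,2}_\phi$ together with smoothness of $u$; the substantive content of the argument is just Cauchy--Schwarz, which is also what identifies the equality case.
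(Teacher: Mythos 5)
Your proposal is correct and follows essentially the same route as the paper: compute $I'=2D$, compute $D'$ (you via the product rule and self-adjointness, the paper via differentiating $-\int|\nabla u|^2\e^{-\phi}$ and integrating by parts, which is the same identity), apply Cauchy--Schwarz to get $D'I\geq 2D^2$, and read off the rigidity case from equality in Cauchy--Schwarz exactly as the paper does, with $\cL_\phi u=U\,u$ and the ODE $u_t=U\,u$ giving $u(x,t)=\e^{Ut}u(x,0)$.
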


Poon, \cite{P}, proved a monotonicity that can be shown (see Section \ref{s:s1}) to follow from the special case of Theorem \ref{t:hadamard} when $M=\RR^n$, $N=1$ and $\phi=\frac{|x|^2}{4}$.    His monotonicity holds on manifolds with non-negative sectional curvature and parallel Ricci curvature which are exactly the assumptions needed to generalize Hamilton's work, \cite{H1}, \cite{H2}, from Euclidean space to manifolds.\footnote{See the discussion in \cite{P} after theorem 1.1' on page 522 and the remark on page 530.}  In contrast our monotonicity holds on any manifold and no curvature assumption is needed.

\vskip2mm
Theorem \ref{t:hadamard} has the following immediate consequences (recall that $U$ is non-positive):

\begin{Cor}  \label{c:UniqueCont}
If $u:M\times [a,b]\to \RR^N$ and $(\partial_t-\cL_{\phi})\,u=0$, then
\begin{align}  \label{e:firstpart}
I(b)\geq I(a)\,\e^{2\, U(a)\,(b-a)}\, .
\end{align}
In particular, if $u(\cdot,b)=0$, then $u \equiv 0$. 
\end{Cor}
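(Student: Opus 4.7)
The plan is to deduce both conclusions directly from Theorem \ref{t:hadamard}. Assume first that $I(a) > 0$, so $U(a) = D(a)/I(a)$ is finite. Since the theorem gives $(\log I)'(t) = 2\,U(t)$ and $U' \geq 0$, the function $U$ is non-decreasing on $[a,b]$, so $U(t) \geq U(a)$ for every $t \in [a,b]$. Integrating the inequality $(\log I)'(t) \geq 2\,U(a)$ from $a$ to $b$ gives $\log I(b) - \log I(a) \geq 2\,U(a)\,(b-a)$, and exponentiating yields \eqref{e:firstpart}. If instead $I(a) = 0$, then \eqref{e:firstpart} reads $I(b) \geq 0$, which is trivial. (One mild subtlety is ensuring that the log-derivative identity is applied only where $I > 0$; since $I$ is continuous and $I(a) > 0$, this holds on a neighborhood of $a$, and the argument can be localized as below.)

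For the unique continuation statement, suppose $u(\cdot,b) = 0$, i.e.\ $I(b) = 0$. The plan is to argue by contradiction: if there exists $t_0 \in [a,b)$ with $I(t_0) > 0$, then applying the first part of the corollary on the subinterval $[t_0,b]$ in place of $[a,b]$ yields
\begin{equation*}
0 = I(b) \geq I(t_0)\,\e^{2\,U(t_0)\,(b-t_0)} > 0,
\end{equation*}
where finiteness of $U(t_0)$ follows from $I(t_0) > 0$ together with the integrability hypothesis $u \in W^{1,2}_{\phi}$ (which makes $D(t_0)$ finite). This contradiction shows $I(t) \equiv 0$ on $[a,b]$, hence $u \equiv 0$.

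The main content is entirely in Theorem \ref{t:hadamard}; no obstacles remain beyond a one-line integration and a contradiction argument. The only point requiring a little care is the pointwise finiteness of $U(t_0)$, but this is immediate from the standing assumption $u,u_t \in W^{1,2}_{\phi}$, which makes $D$ well defined and finite for every $t$.
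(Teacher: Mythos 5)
Your proposal is correct and follows essentially the same route as the paper: integrate the identity $(\log I)'=2\,U$ from Theorem \ref{t:hadamard} together with the monotonicity $U'\geq 0$ to get \eqref{e:firstpart}, with the backward-uniqueness claim following by applying this on subintervals. Your extra remarks on the degenerate case $I(a)=0$ and on finiteness of $U(t_0)$ are harmless elaborations of points the paper leaves implicit.
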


\begin{proof}
By Theorem \ref{t:hadamard}
\begin{align}
\log I(b)-\log I(a)=\int_a^b(\log I)'(s)\,ds = 2\,  \int_a^b  U(s) \, ds \geq 2\, U(a)\,(b-a)\, .
\end{align}
\end{proof}

Equation \eqr{e:firstpart} can be thought of as a bound for the vanishing order at $\infty$ whereas the second part is a version of backward uniqueness.   The first part implies strong unique continuation at $\infty$.   That is, if $u$ vanishes to infinite order at $\infty$, then it vanishes.  We say that $u:M\times (a,\infty)\to \RR$ vanishes to infinite order at $\infty$ if $\lim_{t\to\infty}\e^{c\,t}\,I(t)=0$ for all constants $c$.   

\vskip2mm
Suppose more generally $u$ satisfies the equation:
\begin{align}
(\partial_t-\cL_{\phi}-\lambda)\,u=0\, .
\end{align}
Where $\phi$ is as above and $\lambda=\lambda (t)$ is a function depending on $t$ only.  By considering 
$v(x,t)=\e^{-\int_a^t \lambda (s)\,ds}\,u(x,t)$ and observing that $v$ satisfies \eqr{e:defLphi}.  It follows that our results apply to $v$ and hence we get a monotonicity for $u$.  

\vskip2mm
Our results holds also for more general operators (cf. \cite{ESS}, \cite{W}) where 
\begin{align}  \label{e:assumption}
|(\partial_t-\cL_{\phi})\,u|\leq C(t)\,(|u|+|\nabla u|)\, ,
\end{align}
and $C(t)$ is allowed to depend on $t$; see Theorem \ref{t:UniqueCont1} and Corollary \ref{c:secondUniqueCont}. 

\section*{Acknowledgement}
We are grateful to S. Brendle, R. Hamilton and S. Klainerman for discussions.

\section{Parabolic frequency on manifolds}   \label{s:s1}

\begin{proof}
(of Theorem \ref{t:hadamard}).  
Calculating and integrating by parts gives
\begin{align}
I'(t)&=2\,\int \langle u , \,u_t \rangle \,\e^{-\phi}=2\int \langle u\, ,\cL_{\phi}\,u \rangle \,\e^{-\phi}=-2\int |\nabla u|^2\,\e^{-\phi}=2\,D(t)\, .\label{e:I'}\\
D'(t)&=-2\,\int \langle \nabla u,\nabla u_t\rangle\,\e^{-\phi}=-2\,\int \langle \nabla u,\nabla \cL_{\phi}\,u\rangle\,\e^{-\phi}=2 \int |\cL_{\phi}\,u|^2\,\e^{-\phi}\, .\label{e:D'}
\end{align}
By \eqr{e:I'} and the definition of $U$ we get
\begin{align} \label{e:diffI}
(\log I)'(t)=2\,\frac{D(t)}{I(t)}=2\,U(t)\, .
\end{align}
Therefore, using \eqr{e:I'}, \eqr{e:D'} and \eqr{e:D}
\begin{align} \label{e:cs}
D'\,I-I'\,D&=\left(2\int |\cL_{\phi}\,u|^2\,\e^{-\phi}\right)\,\left(\int |u|^2\,\e^{-\phi}\right)-2\,D^2(t)\notag\\
&=\left(2\int |\cL_{\phi}\,u|^2\,\e^{-\phi}\right)\,\left(\int |u|^2\,\e^{-\phi}\right)-2\,\left(\int \langle  u, \,\cL_{\phi}\,u \rangle \,\e^{-\phi}\right)^2\geq 0\, .
\end{align}
Here the inequality follows from the Cauchy-Schwarz inequality.    Finally, from this we get 
\begin{align}  \label{e:monoU}
U'=\frac{D'\,I-I'\,D}{I^2}\geq 0\, .
\end{align}

When $U$ is constant $U'=0$ and we therefore have equality in the Cauchy-Schwarz inequality \eqr{e:cs}.  It follows that 
\begin{align}
\cL_{\phi}\,u=c(t)\,u\, .
\end{align}
Next to evaluate $c$ we observe that by the second equality in \eqr{e:D}
\begin{align}
D(t)=c(t)\int |u|^2\,\e^{-\phi}=c(t)\,I(t)\, .
\end{align}
It follows that $c(t)=U$ and $\cL_{\phi}\,u=U\,u$.    If we set 
\begin{align}
v(x,t)=\e^{-U\,t}\,u (x,t)\, ,
\end{align}
then we have that 
\begin{align}
\partial_tv=\e^{-U\,t}\,(-U\,u+\partial_tu)=\e^{-U\,t}\,(-U\,u+\cL_{\phi}\,u)=0\, .
\end{align}
From this the second claim follows.  
\end{proof}

There is a natural correspondence on $\RR^n$ between solutions of the ordinary heat equation and solutions of the drift heat equation:
Given  $u:\RR^n\times (-\infty,0)\to \RR$,  define $v(x,t)=u(\sqrt{-t}\,x,t)$, $w(x,s)=v(x,-\e^{-s})$ and $t=-\e^{-s}$.  We have the following:  

\begin{Lem}	\label{l:COV}
The function $w:\RR^n \times (-\infty,0)\to \RR$ defined as above satisfies
\begin{align} \label{e:drifteq}
(\partial_s-\cL_{\frac{|x|^2}{4}})\,w (x,s)= \e^{-s} \, \left( u_t - \Delta u \right)(\e^{ - \frac{s}{2}}x, - \e^{-s})  \, .
\end{align}
\end{Lem}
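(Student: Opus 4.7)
The plan is a direct chain-rule verification, so the main task is just careful bookkeeping. First I would simplify the change of variables: since $t=-\e^{-s}$ gives $\sqrt{-t}=\e^{-s/2}$, the function $w$ is simply
\begin{align}
w(x,s)=u(\e^{-s/2}x,-\e^{-s})\, .
\end{align}
I would then name $y=\e^{-s/2}x$ and $\tau=-\e^{-s}$ and agree that all derivatives of $u$ appearing below are evaluated at $(y,\tau)$.

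Next, compute the spatial derivatives by scaling: differentiating in $x$ brings down a factor $\e^{-s/2}$, so $\nabla_x w = \e^{-s/2}\,\nabla u$ and $\Delta_x w = \e^{-s}\,\Delta u$. For the drift term, use $x=\e^{s/2}y$ to get
\begin{align}
\tfrac{1}{2}\langle \nabla_x w,x\rangle = \tfrac{1}{2}\e^{-s/2}\langle \nabla u,x\rangle = \tfrac{1}{2}\langle \nabla u,y\rangle\, ,
\end{align}
and hence $\cL_{\frac{|x|^2}{4}}\,w = \e^{-s}\,\Delta u - \tfrac{1}{2}\langle \nabla u,y\rangle$.

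For the time derivative I would apply the chain rule to $w(x,s)=u(\e^{-s/2}x,-\e^{-s})$, using $\partial_s(\e^{-s/2}x)=-\tfrac{1}{2}\e^{-s/2}x$ and $\partial_s(-\e^{-s})=\e^{-s}$. The same identification $\e^{-s/2}x=y$ then gives
\begin{align}
\partial_s w = -\tfrac{1}{2}\langle \nabla u,y\rangle + \e^{-s}\,u_t\, .
\end{align}
Subtracting, the two $-\tfrac{1}{2}\langle \nabla u,y\rangle$ contributions cancel and what remains is $\e^{-s}(u_t-\Delta u)$, evaluated at $(\e^{-s/2}x,-\e^{-s})$, which is the claim.

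There is no real obstacle since the lemma is a routine computation. The one place to be careful is matching the drift factor of $\cL_{\frac{|x|^2}{4}}$ against the $\partial_s$ contribution: both turn into $\tfrac{1}{2}\langle \nabla u,y\rangle$ only after using $x=\e^{s/2}y$ to convert the spatial inner product against $x$ into one against $y$. This is precisely the cancellation that produces the clean right-hand side $\e^{-s}(u_t-\Delta u)$.
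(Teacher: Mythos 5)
Your computation is correct and is essentially the paper's proof: a direct chain-rule verification of the spatial and time derivatives of $w(x,s)=u(\e^{-s/2}x,-\e^{-s})$, with the drift term and the $\partial_s$ contribution cancelling. The only cosmetic difference is that the paper routes the chain rule through the intermediate function $v(x,t)=u(\sqrt{-t}\,x,t)$ and keeps the inner product against $x$, whereas you compose in one step and rewrite it against $y=\e^{-s/2}x$; the content is identical.
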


\begin{proof}
To prove \eqr{e:drifteq}, we  use the chain rule to get
\begin{align}	
\partial_tv&=-\frac{1}{2\,\sqrt{-t}}\,\langle \nabla u,x\rangle+u_t \, ,\\
\partial_sw&=-\frac{\sqrt{-t}}{2}\,\langle \nabla u,x\rangle-t\, u_t\, ,  \label{e:e121}  \\
\nabla w&= \sqrt{-t} \,\nabla u\, ,\\
\Delta\,w&=-t\,\Delta\,u\, .  \label{e:e123}
\end{align}
Combining  \eqr{e:e121}--\eqr{e:e123} gives \eqr{e:drifteq}.
\end{proof}

 Poon, \cite{P}, considered solutions $u:\RR^n\times (-\infty,0)\to \RR$ to the ordinary heat equation on Euclidean space.  He showed a monotonicity that is easily seen to be equivalent to that $s\to \log H(\e^{\frac{s}{2}})$ is convex, where  
\begin{align}
H(R)=(-4\,\pi\,R^2)^{-\frac{n}{2}}\int u^2 (y, -R^2)\,\e^{ - \frac{|y|^2}{4\,R^2}}\, .
\end{align}
The convexity of $ \log H(\e^{\frac{s}{2}})$ follows from Theorem \ref{t:hadamard} when $M=\RR^n$ and $\phi=\frac{|x|^2}{4}$.    To see this suppose $u_t = \Delta u$, so that  $(\partial_s-\cL_{\frac{|x|^2}{4}})\,w=0$ by Lemma  \ref{l:COV}.
Using the definition of $I_w(s)$ and making the change of variables $y= \e^{ - \frac{s}{2}} \, x$ and $R= \e^{-\frac{s}{2}}$
gives
\begin{align}
I_w(s)=\int u^2(\e^{-\frac{s}{2}}\,x,-\e^{-s})\,\e^{-\frac{|x|^2}{4}} \, dx = R^{-n} \,  \int u^2(y, - R^2) \, \e^{ - \frac{|y|^2}{4R^2}}\, dy = H (\e^{ \frac{s}{2}}) \, .
\end{align}
From this and Theorem \ref{t:hadamard} the convexity of $\log H(\e^{ \frac{s}{2}})$  follows.

\section{More general operators}

\begin{Thm}    \label{e:genLem}
If $u : M \times [a,b] \to \RR^N$ satisfies \eqr{e:assumption}, then
\begin{align}  
U'&\geq C^2 \,(U-1)\, ,\\
C^2&\geq \left[\log (1-U)\right]'\, .\label{e:consequence}
\end{align}
\end{Thm}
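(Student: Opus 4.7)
The plan is to repeat the proof of Theorem \ref{t:hadamard} while carefully tracking the error term $E := (\partial_t - \cL_\phi)\,u$, which by hypothesis satisfies $|E| \leq C\,(|u|+|\nabla u|)$ pointwise. First, observe that the two stated inequalities are equivalent: since $D(t) \leq 0$ one always has $1-U > 0$, and $\left[\log(1-U)\right]' = -U'/(1-U)$, so \eqr{e:consequence} rewrites as $U' \geq C^2\,(U-1)$. It thus suffices to prove the first inequality.

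The calculations of $I'$ and $D'$ from the proof of Theorem \ref{t:hadamard} pick up error terms:
\begin{align*}
I'(t) &= 2\,D(t) + 2\int \langle u, E\rangle \,\e^{-\phi}\, ,\\
D'(t) &= 2\int |\cL_\phi\,u|^2 \,\e^{-\phi} + 2\int \langle \cL_\phi\,u, E\rangle \,\e^{-\phi}\, .
\end{align*}
Forming $D'\,I - I'\,D$, the $E$-independent part is the familiar Cauchy-Schwarz gap $2\,(A\,I - D^2) = 2\,I \int |\tilde u|^2\,\e^{-\phi}$, where $A := \int |\cL_\phi\,u|^2 \,\e^{-\phi}$ and $\tilde u := \cL_\phi\,u - U\,u$. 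This $\tilde u$ is $L^2_\phi$-orthogonal to $u$ (since $\int \langle \tilde u, u\rangle\,\e^{-\phi} = D - U\,I = 0$), and a direct expansion gives $\int |\tilde u|^2\,\e^{-\phi} = A - U^2\,I$. The $E$-dependent part bundles neatly via the identity $I\,\cL_\phi\,u - D\,u = I\,(\cL_\phi\,u - U\,u) = I\,\tilde u$, so
\begin{align*}
D'\,I - I'\,D = 2\,I \int |\tilde u|^2\,\e^{-\phi} + 2\,I \int \langle E, \tilde u\rangle\,\e^{-\phi}\, .
\end{align*}

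Next I would apply Cauchy-Schwarz to the cross term and complete the square. Setting $\alpha := \bigl(\int|\tilde u|^2\,\e^{-\phi}\bigr)^{1/2}$ and $\beta := \bigl(\int|E|^2\,\e^{-\phi}\bigr)^{1/2}$, one has $2\,I\,\alpha^2 - 2\,I\,\beta\,\alpha = 2\,I\,(\alpha - \beta/2)^2 - I\,\beta^2/2 \geq -I\,\beta^2/2$. The pointwise hypothesis $|E|^2 \leq 2\,C^2\,(|u|^2+|\nabla u|^2)$ yields $\beta^2 \leq 2\,C^2\,(I - D) = 2\,C^2\,I\,(1-U)$. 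Dividing $D'\,I - I'\,D \geq -I\,\beta^2/2$ by $I^2$ and substituting gives $U' \geq -C^2\,(1-U) = C^2\,(U-1)$, as desired.

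The main obstacle is securing the sharp constant $C^2$ rather than $2\,C^2$: a naive AM-GM on the cross term (splitting $2\,\beta\,\alpha \leq \alpha^2 + \beta^2$) only yields $U' \geq 2\,C^2\,(U-1)$, which is strictly weaker since $U-1 < 0$. The remedy is the algebraic rearrangement that isolates the orthogonal component $\tilde u$ before bounding, combined with the sharp completion of squares $(2\alpha - \beta)^2 \geq 0$.
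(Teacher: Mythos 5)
Your proof is correct and is essentially the paper's argument: both compute $I'$ and $D'$ with the error term $E=(\partial_t-\cL_{\phi})u$, establish the key intermediate bound $D'\,I-I'\,D\geq -\tfrac{I}{2}\int |E|^2\,\e^{-\phi}$ by discarding nonnegative Cauchy--Schwarz-type terms, and then use $(a+b)^2\leq 2(a^2+b^2)$ with the pointwise hypothesis to conclude $U'\geq C^2(U-1)$, with \eqr{e:consequence} following by the same elementary equivalence. The only difference is organizational: you isolate the orthogonal component $\tilde u=\cL_{\phi}u-U\,u$ and complete the square, whereas the paper groups terms around $u_t-\tfrac{1}{2}(u_t-\cL_{\phi}u)$ and applies Cauchy--Schwarz directly, but the resulting inequality and constants are identical.
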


\begin{proof}
First we rewrite $D$ as follows
\begin{align}
	D =  \int \langle u  , \, \cL_{\phi}\, u \rangle \, \e^{-\phi}  =   \int \langle u \, , [ u_t - \frac{1}{2}\,(u_t - \cL_{\phi}\, u)] \rangle \, \e^{-\phi} -\frac{1}{2}\int \langle  u , \,(u_t-\cL_{\phi}\,u) \rangle \,\e^{-\phi} \, .
\end{align}
Differentiating $I(t)$ and rewriting gives
\begin{align}  \label{e:diffIgen}
I'(t)&=2\int \langle u, \,u_t \rangle \,\e^{-\phi}=2\int \langle u\, , \cL_{\phi}\,u \rangle \,\e^{-\phi}+2\int \langle  u , \,(u_t-\cL_{\phi}\,u) \rangle \,\e^{-\phi}\notag\\
&=2 \int \langle u ,  \, [ u_t - \frac{1}{2}\,(u_t - \cL_{\phi} u)] \rangle  \, \e^{-\phi}+\int \langle u , \,(u_t-\cL_{\phi}\,u) \rangle \,\e^{-\phi}\, .
\end{align}
Hence, 
\begin{align}  \label{e:secondterm}
I'(t)\,D(t)=2\,\left(\int \langle u , \, [ u_t - \frac{1}{2}\,(u_t - \cL_{\phi}\, u) ]  \rangle \, \e^{-\phi}\right)^2-\frac{1}{2}\,\left(\int \langle u , \,(u_t-\cL_{\phi}\,u) \rangle \,\e^{-\phi}\right)^2\, .
\end{align}
Differentiating $D(t)$ and integrating by parts gives
\begin{align} 
D'(t)=-2\,\int \langle \nabla u,\nabla u_t\rangle\,\e^{-\phi}  &= 2\int \langle u_t  , \, \cL_{\phi}\, u \rangle \, \e^{-\phi} = 2\int \langle u_t , \, ( u_t - [u_t - \cL_{\phi}\, u]) \rangle\, \e^{-\phi} \notag \\
	&= 
	2 \int \left\{ |u_t - \frac{1}{2} \, [ u_t - \cL_{\phi}\, u]|^2 - \frac{1}{4} \, | u_t - \cL_{\phi}\, u|^2 \right\} \, \e^{-\phi} \, .
\end{align}
So
\begin{align}
D'(t)\,I(t)= 2\, I(t) \, \int |u_t - \frac{1}{2} \, [ u_t - \cL_{\phi}\, u]|^2\, \e^{-\phi}  
- \frac{I(t)}{2} \,\int | u_t - \cL_{\phi}\, u|^2 \, \e^{-\phi}\, .   \label{e:firstterm}
\end{align}
Combining \eqr{e:secondterm}  and \eqr{e:firstterm} and using the Cauchy-Schwarz inequality, \eqr{e:assumption} and the elementary inequality $(a+b)^2\leq 2\,(a^2+b^2)$ gives
\begin{align}  \label{e:keyinequality}
D'\,I-I'\,D&= 
	2 \,\left[ \int |u|^2 \, \e^{-\phi} \, \int |u_t - \frac{1}{2} \, [ u_t - \cL_{\phi}\, u] |^2  \, \e^{-\phi}-\left(\int \langle u , \, [ u_t - \frac{1}{2}\,(u_t - \cL_{\phi}\, u)] \rangle \, \e^{-\phi}\right)^2\right]\notag\\
	&- \frac{I(t)}{2} \int | u_t - \cL_{\phi}\, u|^2 \, \e^{-\phi}+\frac{1}{2}\,\left(\int \langle u , \,(u_t-\cL_{\phi}\,u) \rangle \,\e^{-\phi}\right)^2\\
	&\geq - \frac{I(t)}{2}\int |u_t - \cL_{\phi}\, u|^2 \, \e^{-\phi}\geq -\frac{C^2\,I(t)}{2}\int (|u|+|\nabla u|)^2\,\e^{-\phi}\notag
	\geq -C^2 \,I(t)\,(I(t)-D(t))\, .\notag
\end{align}
Dividing both sides by $I^2(t)$ gives the first claim.  
The second follows from the first.
\end{proof}

This leads to the following generalization of Corollary \ref{c:UniqueCont}:

\begin{Cor}  \label{t:UniqueCont1}
If $u:M\times [a,b]\to \RR^N$ satisfies \eqr{e:assumption} then
\begin{align}
I(b)\geq I(a)\,\exp\,\left((b-a)\,  (2 + \sup_{[a,b]} C) \,  \left[\exp\, \left(\int_a^b C^2(s)\,ds\right)\,[U(a)-1]+1-\frac{3}{2}\,\sup_{[a,b]}\,C\right]\right)\, . \notag
\end{align}
In particular, if $u(\cdot,b)=0$, then $u\equiv 0$. 
\end{Cor}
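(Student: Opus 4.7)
The plan is to mirror the proof of Corollary \ref{c:UniqueCont}, using Theorem \ref{e:genLem} in place of Theorem \ref{t:hadamard}. First I would extract from Theorem \ref{e:genLem} a pointwise lower bound on $U$: integrating $[\log(1-U)]' \leq C^2$ from $a$ to $t$ and exponentiating yields
\begin{align*}
U(t) \geq 1 + (U(a)-1)\,\exp\!\Bigl(\int_a^t C^2(s)\,ds\Bigr) \geq 1 + K\,(U(a)-1),
\end{align*}
where $K := \exp\bigl(\int_a^b C^2\bigr)$; here I use that $U(a)-1\leq 0$ and $\exp\!\bigl(\int_a^t C^2\bigr)\leq K$, so the pointwise bound upgrades to a uniform lower bound on $[a,b]$.

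Next I would derive a lower bound for $(\log I)'$ that is linear in both $U$ and $C$. Writing $I'(t) = 2D(t) + 2\int\langle u, u_t - \cL_\phi u\rangle\,e^{-\phi}$, the pointwise inequality $|\langle u,v\rangle|\leq |u||v|$, the assumption \eqr{e:assumption}, and $|u||\nabla u|\leq \tfrac12(|u|^2+|\nabla u|^2)$ combine to give
\begin{align*}
\Bigl|\int \langle u, u_t-\cL_\phi u\rangle\,e^{-\phi}\Bigr| \leq C(t)\int |u|\,(|u|+|\nabla u|)\,e^{-\phi} \leq \tfrac{C(t)}{2}\,(3I - D).
\end{align*}
Dividing by $I$ gives $(\log I)'(t)\geq (2+C(t))\,U(t) - 3\,C(t)$. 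Plugging in the bound from the first step and tracking signs carefully (using $2+C>0$, $K(U(a)-1)\leq 0$, and $C\leq \bar C:= \sup_{[a,b]} C$), one checks that
\begin{align*}
(\log I)'(t) \geq (2+\bar C)\Bigl[K(U(a)-1) + 1 - \tfrac{3}{2}\bar C\Bigr].
\end{align*}
Integrating over $[a,b]$ gives the stated estimate on $\log(I(b)/I(a))$.

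For the unique continuation consequence, suppose $u(\cdot,b)\equiv 0$, so $I(b)=0$. For any $t\in[a,b)$ with $I(t)>0$, the frequency $U(t)=D(t)/I(t)$ is finite, hence the exponential factor in the estimate applied on $[t,b]$ is strictly positive and finite, forcing $I(b)>0$, a contradiction. Hence $I\equiv 0$ on $[a,b]$, so $u\equiv 0$.

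The main obstacle is choosing the right Cauchy-Schwarz pairing in the second step. A naive split of $\int |u|\,|u_t-\cL_\phi u|\,e^{-\phi}$ via two separate $L^2$ norms would produce a quadratic $(2+C^2)$ coefficient together with a $\sqrt{1-U}$ factor that is awkward to control; grouping $|u|(|u|+|\nabla u|)$ \emph{before} estimating produces the linear-in-$C$ inequality $(2+C)U-3C$ that matches the form of the stated bound. The remaining work is straightforward bookkeeping: exploiting that $U-1\leq 0$ and $K\geq 1$ to convert the $t$-pointwise control of $U$ into a uniform lower bound for $(\log I)'$ that can be integrated directly.
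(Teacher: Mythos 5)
Your proof is correct and is essentially the paper's own argument: you use \eqr{e:consequence} from Theorem \ref{e:genLem} to get $U(t)\geq 1+(U(a)-1)\exp\bigl(\int_a^b C^2\bigr)$, bound $(\log I)'$ from below by an expression linear in $U$ and $C$ via the rewritten $I'$ in \eqr{e:diffIgen} together with Cauchy--Schwarz/AM--GM, and then integrate over $[a,b]$, exactly as in the paper. The only differences are cosmetic and harmless: you insert the uniform lower bound on $U$ pointwise before integrating rather than after, and your stated inequality $(\log I)'\geq (2+C)U-3C$ is a slight weakening of the $(2+\tfrac{C}{2})U-\tfrac{3}{2}C$ that your displayed estimate (and the paper) actually gives, which still yields the stated bound since $U\leq 0$ and $(2+\bar C)\bigl(-\tfrac{3}{2}\bar C\bigr)\leq -3\bar C$ is not needed in your grouping.
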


\begin{proof}
It follows from \eqr{e:diffIgen}, the Cauchy-Schwarz inequality and the elementary inequality $a\leq \frac{1}{2}\,\left(a^2+1\right)$ applied to $a=\sqrt{-U}$ that 
\begin{align}  \label{e:genlogI'}
(\log I)'&\geq 2\,U-\frac{C}{I}\,\int |u|\,(|u|+|\nabla u|)\,\e^{-\phi}\geq 2\,U-C\,(1+\sqrt{-U})\notag\\
&\geq \left(2+\frac{C}{2}\right)\,U-\frac{3\,C}{2}\, .
\end{align}
From this we get that 
\begin{align}    \label{e:difference}
\log I(b)-\log I(a)&=\int_a^b (\log I)'(s)\,ds\notag\\
&\geq \frac{1}{2}\,\left(4+\sup_{[a,b]}\, C\right)\int_a^b U(s)\,ds-\frac{3}{2}\,\sup_{[a,b]}\,C\,(b-a)\, .
\end{align}
From \eqr{e:consequence} we get that for $s\in [a,b]$
\begin{align}
\log (1-U(s))\leq \log (1-U(a))+\int_a^s C^2(r)\,dr \leq  \log (1-U(a))+\int_a^b C^2(s)\,ds\, .
\end{align}
Therefore
\begin{align}
U(s)\geq \exp\, \left(\int_a^s C^2(s)\,ds\right)\,(U(a)-1) +1\, .
\end{align}
Inserting this lower bound in \eqr{e:difference} and integrating gives
\begin{align}
\log I(b)-\log I(a)
\geq (b-a)\,\left[\exp\, \left( (2 + \sup_{[a,b]} C) \, \int_a^b C^2(s)\,ds\right)\,[U(a)-1]+1-\frac{3}{2}\,\sup_{[a,b]}\,C\right]\, . \notag
\end{align}
\end{proof}

Recall that that $u:M\times (a,\infty)\to \RR^N$ vanishes to infinite order at $\infty$ if $\lim_{t\to\infty}\e^{c\,t}\,I(t)=0$ for all constants $c$.    
Theorem \ref{t:UniqueCont1} implies the following strong unique continuation at $\infty$:

\begin{Cor}  \label{c:secondUniqueCont}
Suppose that $\sup\, C+\int_a^{\infty}C^2(s)\,ds<\infty$ and $u:M\times [a,\infty)\to \RR^N$ is a solution of \eqr{e:assumption} that vanishes to infinite order at $\infty$, then $u$ vanishes.
\end{Cor}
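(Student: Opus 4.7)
The plan is a direct contradiction argument based on Corollary \ref{t:UniqueCont1}. Assume $u \not\equiv 0$, so there is some $t_0 \geq a$ with $I(t_0) > 0$; I will derive a uniform exponential lower bound of the form $I(b) \geq \mathrm{const} \cdot \e^{-\tilde C\,b}$ for all $b \geq t_0$, and contradict the vanishing of infinite order at $\infty$ by choosing $c = \tilde C$ in the definition.

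First, I would apply Corollary \ref{t:UniqueCont1} on $[t_0, b]$ for arbitrary $b > t_0$. This gives $I(b) \geq I(t_0)\, \exp\bigl((b - t_0)\, X_b\bigr)$, where
$$X_b = \bigl(2 + \sup_{[t_0,b]} C\bigr) \left[ \exp\Bigl(\int_{t_0}^b C^2(s)\, ds \Bigr) \bigl(U(t_0) - 1\bigr) + 1 - \frac{3}{2}\, \sup_{[t_0,b]} C \right].$$

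The key (and essentially only) step is to observe that $X_b$ is bounded below by a constant independent of $b$. This is immediate from the hypothesis $\sup C + \int_a^{\infty} C^2(s)\, ds < \infty$: the prefactor $2 + \sup_{[t_0,b]} C$ lies in $[2, 2 + \sup C]$, the exponential factor $\exp\bigl(\int_{t_0}^b C^2\bigr)$ lies in $[1, \exp(\int_a^{\infty} C^2)]$, and $U(t_0) - 1 \leq -1$ is a fixed negative number since $U \leq 0$. Combining these bounds shows $X_b \geq -\tilde C$ for some constant $\tilde C$ depending only on $t_0$, $U(t_0)$, $\sup C$, and $\int_a^{\infty} C^2$, but not on $b$.

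Hence $\e^{\tilde C\,b}\, I(b) \geq I(t_0)\, \e^{\tilde C\,t_0} > 0$ for all $b \geq t_0$, which directly contradicts $\lim_{t \to \infty} \e^{\tilde C\,t}\, I(t) = 0$ from the infinite-order vanishing hypothesis (applied with $c = \tilde C$). Therefore $I \equiv 0$, i.e., $u \equiv 0$. The main obstacle is merely verifying the uniform lower bound on $X_b$; once in place, the rest is a one-line exponential comparison, so the proof is essentially a bookkeeping exercise around Corollary \ref{t:UniqueCont1}.
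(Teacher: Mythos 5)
Your argument is correct and is exactly the route the paper intends (the paper leaves this proof implicit as an immediate consequence of Corollary \ref{t:UniqueCont1}): apply that corollary on $[t_0,b]$, note the exponent is bounded below uniformly in $b$ because $\sup C$ and $\int_a^\infty C^2$ are finite and $U(t_0)\le 0$ is finite, and contradict infinite-order vanishing with $c=\tilde C$.
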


 This corollary implies the unique continuation of Poon, \cite{P}, who considered functions $u$ on $\RR^n$ into $\RR$ with
 \begin{align}
 		 u_t - \Delta u  = \langle b (x,t) , \nabla u \rangle + c(x,t) \, u \, ,
 \end{align}
 where $|b| + |c| \leq C$ is uniformly bounded (cf. \cite{L}).
 We will see that the results here apply more generally to functions $u$ satisfying the differential inequality
 \begin{align}
 	\left| u_t - \Delta u \right| \leq C \, (|u| + |\nabla u|) \, .
 \end{align}
 Applying the transformation in Lemma \ref{l:COV} to $u$, we get a function $w(y,s)$ with 
  \begin{align}
 	\left| \left(\partial_s - \cL_{\frac{|y|^2}{4}}\right) \, w \right| &= \e^{-s} \, \left| ( \partial_t - \Delta) \, u \right| \leq C \, \e^{-s} \left( |u| + |\nabla u| \right) \notag \\
	&\leq C \, \e^{-s} |w| + C \, \e^{ - \frac{s}{2} } \, |\nabla w|
	\, .
\end{align}
Since $\int_0^{\infty}\e^{- {s}}\,ds<\infty$, Corollary \ref{c:secondUniqueCont} applies.    Exponential decay of order $c$, i.e., decay like $\e^{-c\,s}$, corresponds to polynomial decay $t^c$ in the transformed variable $t=-\e^{-s}$.  

\subsection{Without $u$ term}

In this subsection we assume that $u:M\times [a,b]\to \RR^N$ satisfies
\begin{align}    \label{e:stronger}
|(\partial_t-\cL_{\phi})\,u|\leq C(t)|\nabla u|\, .
\end{align}
In this case we get better estimates when $U(a)$ is small.  
It follows from \eqr{e:keyinequality}, with obvious simplifications in the second to last inequality from using \eqr{e:stronger} in place of \eqr{e:assumption}, that
$U'\geq \frac{C^2}{2}\,U$ or, 
  equivalently,
\begin{align}
[\log (-U)]'\leq \frac{C^2}{2}\, .
\end{align}
We therefore get that
\begin{align}
U(s)\geq U(a)\,\exp\,\left(\frac{1}{2}\int_a^s C^2(\tau)\,d\tau\right)\, .
\end{align}
With similar simplifications in \eqr{e:genlogI'} we get that for $s\in [a,b]$
\begin{align}
(\log I)'&\geq  2\,U-C\,\sqrt{-U}\notag\\
&\geq 2\,U(a)\,\exp\,\left(\frac{1}{2}\int_a^b C^2(\tau)\,d\tau\right)
-C\,\sqrt{-U(a)}\,\exp\left(\,\frac{1}{4}\int_a^b C^2(\tau)\,d\tau\right)\, .
\end{align}
Integrating gives
\begin{align}
I(b)\geq I(a)\,\exp\,\left[(b-a)\,\left\{2\,U(a)\,\exp\,\left(\frac{1}{2}\int_a^b C^2(\tau)\,d\tau\right)
-C\,\sqrt{-U(a)}\,\exp\left(\,\frac{1}{4}\int_a^b C^2(\tau)\,d\tau\right)\right\}\right]\, .\notag
\end{align}

\end{document}